\DeclareMathOperator{\Hes}{\rm Hes}
\DeclareMathOperator{\Ric}{\rm Ric}
\newcommand{\nh}{\nabla h}
\newtheorem{theorem}{Theorem}[section]
\newtheorem{lemma}[theorem]{Lemma}
\newtheorem{corollary}[theorem]{Corollary}
\theoremstyle{definition}
\theoremstyle{remark}
\newtheorem{remark}[theorem]{Remark}
\newcommand\restr[2]{{
  \left.\kern-\nulldelimiterspace 
  #1 
  \vphantom{\big|} 
  \right|_{#2} 
  }}
\begin{document}
\title[{The vacuum weighted Einstein field equations on pr-waves}]{The vacuum weighted Einstein field equations on pure radiation waves}
\author{M. Brozos-V\'azquez, D. Moj\'on-\'Alvarez}
\address{MBV: CITMAga, 15782 Santiago de Compostela, Spain}
\address{\phantom{MBV:}  
	Universidade da Coru\~na, Campus Industrial de Ferrol, Department of Mathematics, 15403 Ferrol, Spain}
\email{miguel.brozos.vazquez@udc.gal}
\address{DMA: CITMAga, 15782 Santiago de Compostela, Spain}
\address{\phantom{DMA:}
	University of Santiago de Compostela,
15782 Santiago de Compostela, Spain}
\email{diego.mojon@rai.usc.es}
\thanks{Research partially supported by grants PID2022-138988NB-I00 funded by MICIU/AEI/10.13039/501100011033 and by ERDF, EU, and ED431F 2020/04 (Xunta de Galicia, Spain); and by contract FPU21/01519 (Ministry of Universities, Spain).}
\subjclass[2020]{53B30, 53C50, 53C21}
\date{}
\keywords{Smooth metric measure space, weighted Einstein field equations, pure radiation waves, plane waves.}

\maketitle

\begin{abstract}
We classify solutions of the vacuum weighted Einstein field equations on smooth metric measure spacetimes $(M,g,h\, dvol_g)$ of dimension 4, where the underlying manifold $(M,g)$ is a $pr$-wave. We use this result to provide examples of solutions with some special geometric properties. 

The gradient $\nabla h$ is lightlike or spacelike. In the first case, the underlying manifold is a $pp$-wave. In the second case, the Ricci operator is nilpotent. Moreover, $2$-step nilpotent solutions are also realized on $pp$-waves.
\end{abstract}


\section{Introduction}

A usual spacetime can be generalized by introducing a positive density function $h$. This density alters the Riemannian volume element $dvol_g$ and gives rise to a {\it smooth metric measure spacetime (SMMS)}, i.e., a triple $(M,g,h\, dvol_g)$, or $(M,g,h)$ for short, where $(M,g)$ is a Lorentzian manifold and $h$ is a positive smooth function on $M$. We assume the density to be non-constant on any open subset of $M$ so that $\nh\neq 0$. The study of SMMSs is based around geometric objects known as {\it weighted invariants}, which reflect the influence of the density on the geometry of the underlying spacetime. 

One such object is the {\it weighted Einstein tensor},
\[
G^h=h\rho-\Hes_h+\Delta h g,
\]
where $\rho$ is the Ricci tensor and $\Hes_h$ and $\Delta h$ denote the Hessian tensor and the Laplacian of $h$, respectively. $G^h$ was introduced in \cite{Brozos-Mojon} as a tensor which is symmetric, divergence-free if the scalar curvature is constant, concomitant of the metric, the density function and their first two derivatives, and linear on the second derivatives of $g$ as well as the density function. These properties are reminiscent of those that characterize the usual Einstein tensor $G=\rho-\frac{\tau}2g$ of General Relativity (see \cite{Lovelock} for details). Moreover, it was shown in \cite{Brozos-Mojon-noniso} that a suitable modification of the variational problem of the Einstein-Hilbert functional, in order to include the new volume element $h\, dvol_g$, yields the condition
\begin{equation}\label{eq:vacuum-Einstein-field-equations}
		G^h=h\rho -\Hes_h+\Delta h g=0
\end{equation}
at critical points. These two facts suggest that this is a suitable weighted analogue to the vacuum Einstein field equations of General Relativity, and thus the differential equations given by \eqref{eq:vacuum-Einstein-field-equations} shall be referred to as the {\it vacuum weighted Einstein field equations}. Note that equation~\eqref{eq:vacuum-Einstein-field-equations} does not only arise naturally in the context of SMMSs, but also as a second order equation with several geometric interests. Indeed, it was previously considered in Riemannian signature, not only from a purely geometric standpoint, but also due to its relation with static spacetimes. The equation itself already appeared through the variation of the scalar curvature (see \cite{Bla,Marsden}), but later works by Kobayashi and Lafontaine \cite{Kobayashi, Lafontaine} laid the groundwork for a more systematic study in the Riemannian setting (see also \cite{Shen,Yuan}).

As in the usual setting, a {\it solution} of the vacuum weighted Einstein field equations will be regarded as a SMMS $(M,g,h)$ such that $G^h=0$ on $M$. Thus, for any solution, the divergence-free property of $G^h$, which is also characteristic of the usual Einstein tensor, is always satisfied. This gives a first geometric feature common to all solutions and also found in the standard vacuum solutions of General Relativity.

\begin{lemma}\label{le:const-sc} {\rm \cite{Brozos-Mojon}}
	If $(M,g,h)$ is a solution of the vacuum weighted Einstein field equations, then its scalar curvature $\tau$ is constant.
\end{lemma}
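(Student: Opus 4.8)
The plan is to take the divergence of the equation $G^h=0$ and exploit the classical second Bianchi and Bochner identities to isolate $d\tau$. Rewrite \eqref{eq:vacuum-Einstein-field-equations} as $h\rho=\Hes_h-\Delta h\, g$ and apply $\diver$ to both sides. These are tensorial identities valid in any signature, so the Lorentzian setting causes no difficulty.

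For the left-hand side, the Leibniz rule together with the contracted second Bianchi identity $\diver\rho=\tfrac12\,d\tau$ gives
\[
\diver(h\rho)=\rho(\nf,\cdot)+\tfrac{h}{2}\,d\tau .
\]
For the right-hand side, the Bochner-type commutation formula $\diver(\Hes_h)=d(\Delta h)+\rho(\nf,\cdot)$, obtained by commuting covariant derivatives in $\nabla^i\nabla_i\nabla_j h=\nabla^i\nabla_j\nabla_i h$, combined with $\diver(\Delta h\, g)=d(\Delta h)$, yields
\[
\diver(\Hes_h-\Delta h\, g)=\rho(\nf,\cdot).
\]
Subtracting, the curvature terms $\rho(\nf,\cdot)$ cancel and one is left with $\diver(G^h)=\tfrac{h}{2}\,d\tau$.

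Since $(M,g,h)$ is a solution, $G^h\equiv 0$, hence $\tfrac{h}{2}\,d\tau\equiv 0$; as $h>0$ by hypothesis, $d\tau\equiv 0$, so $\tau$ is locally constant and therefore constant on (the connected manifold) $M$. The only point requiring care is the bookkeeping of sign conventions in the commutator of covariant derivatives used to derive the Bochner identity; once that is fixed the argument is a short computation with no genuine obstacle.
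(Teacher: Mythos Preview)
Your argument is correct: computing $\diver G^h=\tfrac{h}{2}\,d\tau$ via the contracted Bianchi identity and the commutation formula $\diver(\Hes_h)=d(\Delta h)+\rho(\nabla h,\cdot)$, and then using $h>0$, is exactly the standard route. Note, however, that the paper does \emph{not} supply its own proof of this lemma; it is quoted from \cite{Brozos-Mojon}, and the surrounding text (``$G^h$ \dots\ divergence-free if the scalar curvature is constant'') makes clear that precisely this divergence computation is what underlies the cited result. So your approach coincides with the intended one.

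One cosmetic point: you write $\rho(\nf,\cdot)$ throughout, but the density is $h$, not $f$; in the paper's macros $\nf=\nabla f$ while $\nh=\nabla h$, so replace $\nf$ by $\nh$. This is purely notational and does not affect the mathematics.
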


Many interesting spacetimes that appear in General Relativity are characterized by the presence of a distinguished vector field with some particular features. This is the case of $pr$-waves, which are a class of Brinkmann waves that generalizes $pp$-waves. We refer to Section~\ref{sect2} for more details on this kind of manifold. In this note, we search for solutions of \eqref{eq:vacuum-Einstein-field-equations} with this particular underlying structure and we see that different kinds of solutions arise. 

Solutions can be broadly classified according to the causal character of $\nh$, where those with $\nh$ lightlike are called {\it isotropic}, and those with $\nh$ timelike or spacelike are referred to as {\it non-isotropic}. The geometric characteristics of solutions, as well as the approach needed to analyze equation~\eqref{eq:vacuum-Einstein-field-equations}, are often quite different depending on whether they are isotropic or non-isotropic. The analysis of solutions on $pr$-waves that we perform in Section \ref{sect:pr-waves} gives evidence of this fact.

In summary, the main objective of this work is to further our understanding of the vacuum weighted Einstein field equations by studying $4$-dimensional solutions among $pr$-waves. The special relevance of this family lies in its geometric and physical properties, and in its relation with known results about solutions of \eqref{eq:vacuum-Einstein-field-equations}, as we discuss in the following sections. In Section~\ref{sect:examples} we analyze more specific examples, classifying $pr$-waves with harmonic curvature and providing geodesically complete examples on specific families.

\section{Pure radiation waves}\label{sect2}

{\it Brinkmann waves} constitute a remarkable family of spacetimes that includes other special Lorentzian manifolds as subclasses. They are characterized by the presence of a recurrent vector field $V$, i.e. $\nabla_XV=\omega(X)V$  for a $1$-form $\omega$ on $M$ and every $X\in  \mathfrak{X}(M)$.

Imposing specific conditions on the curvature of  Brinkmann waves results in some families of interest with more restricted geometry. Thus, following terminology in \cite{leistner}, {\it pure radiation waves}  ($pr$-waves for short), are Lorentzian manifolds $(M,g)$ which have a recurrent lightlike vector field $V$ such that the curvature tensor satisfies $R(V^\perp,V^\perp,-,-)=0$, where $V^\perp$ is the orthogonal complement of $V$. For a $4$-dimensional $pr$-wave, there exist local coordinates $(u,v,x,y)$ such that the metric takes the form
\begin{equation}\label{eq:local-coord-pr-xeral}
	g=2dudv+F(u,v,x,y)dv^2+dx^2+dy^2.
\end{equation}
Whenever the distinguished vector field $V$ is parallel, the $pr$-wave is said to be a $pp$-wave, and coordinates can be specialized so that the function  $F$ in \eqref{eq:Fprwave-1} satisfies $\partial_uF=0$. Note that a Brinkmann wave with parallel lightlike vector field $V$ is a $pp$-wave if and only if $R(V^\perp,V^\perp,\cdot,\cdot)=0$. Moreover, it was pointed out in \cite{leistner} that a $pr$-wave is a $pp$-wave if and only if it is Ricci isotropic, this is, the image of the Ricci operator $\Ric$ is totally isotropic.

Furthermore, a {\it plane wave} is a $pp$-wave with transversally parallel curvature tensor (i.e., such that $\nabla_{V^\perp} R=0$).  These families arise frequently  as solutions of the Einstein equations in special situations described in General Relativity, and we refer to \cite{stephani-et-al} for examples of contexts where they play a role. The metric of a plane wave takes the form \eqref{eq:local-coord-pr-xeral} where $F(u,v,x,y)=a(v)x^2+b(v)xy+c(v) y^2$ and the coefficients $a$, $b$, and $c$ are smooth functions of $v$. If $a$, $b$, and $c$ are constants, these metrics correspond to {\it Cahen-Wallach symmetric spaces} \cite{cahen-wallach} (see also \cite{Cahen-Leroy-Parker-Tricerri-Vanhecke}), a well-known family of geodesically complete Lorentzian manifolds.

Previous works on the vacuum weighted Einstein field equations (see \cite{Brozos-Mojon,Brozos-Mojon-noniso}) illustrate that spacetimes with a distinguished lightlike vector field play a central role among the set of solutions. Indeed, Brinkmann waves, and more broadly Kundt spacetimes (which we will not discuss in this note), are of special importance for isotropic solutions, but also arise in the non-isotropic case under some geometric conditions. For example, it was shown in \cite{Brozos-Mojon} that isotropic solutions are necessarily Kundt spacetimes and, moreover, that they are Brinkmann waves if the Ricci operator is $2$-step nilpotent. Also, in dimension four, isotropic Ricci-flat solutions are $pp$-waves. In the non-isotropic case, if the curvature is harmonic, it is shown in \cite{Brozos-Mojon-noniso} that all solutions with non-diagonalizable Ricci operator are realized on Kundt spacetimes.

\section{Vacuum solutions realized on pr-waves}\label{sect:pr-waves}

Motivated by the relation between vacuum weighted solutions and spacetimes with a parallel line field pointed out in Section~\ref{sect2}, we consider the family of $pr$-waves.  In this section we carry out a systematic search of solutions in this context and provide a complete classification as follows.


\begin{theorem}\label{th:pr-waves-classification}
	Let $(M,g,h)$ be a 4-dimensional, non-flat solution of the vacuum weighted Einstein field equations~\eqref{eq:vacuum-Einstein-field-equations} realized on a $pr$-wave. Then $\nh$ is lightlike or spacelike, and the following holds:
	\begin{enumerate}
		\item If $\nh$ is lightlike, then $(M,g)$ is a $pp$-wave with harmonic curvature. Moreover, there exist local coordinates as in \eqref{eq:local-coord-pr-xeral} with $\partial_x^2 F+\partial_y^2 F=\gamma(v)$ and $\partial_uF=0$ such that the density function $h=h(v)$ satisfies $2h''+h\gamma=0$.
		\item If $\nh$ is spacelike, then $\Ric$ is nilpotent and, moreover:
		\begin{enumerate}
			\item If $\Ric$ is $2$-step nilpotent, then $(M,g)$ is a $pp$-wave. Moreover, there exist coordinates as in \eqref{eq:local-coord-pr-xeral}
			 with $\partial_uF=0$ and density function of the form $h(v,x,y)=h_0(v)+(x+Ay)h_x$, with $A\in\mathbb{R}$ and $h_x\neq 0$, satisfying 
			\begin{equation}\label{eq:condition-pp-wave}
			 0=-2 G^h_{vv}=	2h_0''+h_x(\partial_xF+A \partial_yF)+h(\partial_x^2F+\partial_y^2F).
			\end{equation}
			\item If $\Ric$ is $3$-step nilpotent, then there exist $(u,v,x,y)$ such that the density function takes the form $h(v,x,y)=h_0(v)+(x+Ay)h_x(v)$, with $A\in\mathbb{R}$ and $h'_x\neq 0$, and the metric is as in \eqref{eq:local-coord-pr-xeral} with 
			\[
			F(u,v,x,y)=F_0(v,x,y)+u \left(\frac{2 h_x'(v)
				\log (h(v,x,y))}{h_x(v)}+\alpha (v)\right)
			\]
			satisfying
			\begin{equation}\label{eq:remaining-condition-pr-wave}
				\begin{array}{rcl}
					0=-2h_x G^h_{vv}&=&2h_x'(h_0'+(x+Ay)h_x')\log(h_0+(x+Ay)h_x) \\
					\noalign{\medskip}
					&&+h_x^2(\partial_xF_0+A\partial_yF_0+(x+Ay)(\partial_x^2F_0+\partial_y^2F_0)) \\
					\noalign{\medskip}
					&&+h_x(\alpha(h_0'+(x+Ay)h_x')+2h_0''+2(x+Ay)h_x'') \\
					\noalign{\medskip}
					&&+h_xh_0(\partial_x^2F_0+\partial_y^2F_0).
				\end{array}
			\end{equation}
		\end{enumerate}
	\end{enumerate}
\end{theorem}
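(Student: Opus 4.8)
The plan is to write \eqref{eq:vacuum-Einstein-field-equations} out component by component in the coordinates \eqref{eq:local-coord-pr-xeral}. I would first record the curvature of a $pr$-wave in these coordinates: the Christoffel symbols satisfy $\Gamma^c_{ac}=0$ and $\Gamma^c_{xx}=\Gamma^c_{yy}=\Gamma^c_{xy}=0$, the only Ricci components that may be nonzero are $\rho_{uv}=\tfrac12\partial_u^2F$, $\rho_{vx}=\tfrac12\partial_u\partial_xF$, $\rho_{vy}=\tfrac12\partial_u\partial_yF$ and $\rho_{vv}$, and $\tau=\partial_u^2F$; in particular $\rho_{uu}=\rho_{xx}=\rho_{yy}=\rho_{xy}=0$. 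Together with the coordinate formulas for $\Hes_h$ and $\Delta h$, this makes each equation $G^h_{ab}=0$ explicit.

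Next, the ``$\rho$-free'' components of $G^h=0$ pin down $h$. From $G^h_{uu}=0$ one gets $\partial_u^2h=0$, so $h=h_0(v,x,y)+u\,h_1(v,x,y)$; then $G^h_{ux}=G^h_{uy}=0$ force $h_1=h_1(v)$. From $G^h_{xy}=0$ and $G^h_{xx}-G^h_{yy}=0$ one gets $\partial_x\partial_yh=0$ and $\partial_x^2h=\partial_y^2h$, so $h_0=\tfrac{\mu(v)}2(x^2+y^2)+a(v)x+c(v)y+e(v)$ and $\Delta h=\partial_x^2h=\mu(v)$. Comparing this with the direct expression for $\Delta h$ yields $(\partial_uF)\,h_1=2h_1'+\mu$, and then $G^h_{uv}=0$ simplifies to $h\,\partial_u^2F+3\mu=0$. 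A one-line case split on whether $h_1$ vanishes gives $\mu\equiv0$ in all cases, hence $\Delta h=0$, $\partial_u^2F=0$ (so $\tau=0$, consistent with Lemma~\ref{le:const-sc}), $F=u\,\beta(v,x,y)+F_0(v,x,y)$ and $h=e(v)+a(v)x+c(v)y+u\,h_1(v)$ with $\beta\,h_1=2h_1'$.

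I would then remove the $u$-dependence of $h$. If $h_1\neq0$, then $\beta=2h_1'/h_1$ depends only on $v$, so $\rho_{uv}=\rho_{vx}=\rho_{vy}=0$; substituting this into $G^h_{vx}=G^h_{vy}=0$ forces $\partial_xF_0$ and $\partial_yF_0$ to depend only on $v$, so $F$ is affine in $(u,x,y)$, all the curvature-governing derivatives of $F$ vanish, and $(M,g)$ is flat --- contradicting the hypothesis. Hence $h_1\equiv0$, $h=e(v)+a(v)x+c(v)y$, and $g(\nh,\nh)=a(v)^2+c(v)^2\ge0$: $\nh$ is lightlike precisely where $a=c=0$ (i.e.\ $h=h(v)$) and spacelike otherwise, and never timelike. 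If $\nh$ is lightlike, then $h=h(v)$ and $G^h_{vx}=G^h_{vy}=0$ reduce to $h\,\partial_x\beta=h\,\partial_y\beta=0$, so $\beta=\beta(v)$, the Ricci image is spanned by the lightlike vector $\partial_u$, and by the characterization recalled in Section~\ref{sect2} $(M,g)$ is a $pp$-wave; passing to coordinates with $\partial_uF=0$ and using $G^h_{vv}=0$ gives $\partial_x^2F+\partial_y^2F=\gamma(v)$ (equivalently, $\rho$ is a Codazzi tensor, i.e.\ the curvature is harmonic) and $2h''+h\gamma=0$.

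For $\nh$ spacelike, a direct computation of the Ricci operator (using $\rho_{uv}=0$) shows $\Ric^3=0$ always, with $\Ric^2=0$ iff $\rho_{vx}=\rho_{vy}=0$ iff $\beta=\beta(v)$; thus $\Ric$ is nilpotent. If it is $2$-step nilpotent, the Ricci image is again totally null, so $(M,g)$ is a $pp$-wave, and rerunning the reductions in coordinates with $\partial_uF=0$ shows that non-flatness forces $a,c$ constant (otherwise $F$ would be affine in $x,y$, hence flat), which gives $h=h_0(v)+(x+Ay)h_x$ with $h_x\neq0$ constant and turns $G^h_{vv}=0$ into \eqref{eq:condition-pp-wave}. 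If $\Ric$ is $3$-step nilpotent, then $h_1=0$ already, and integrating the coupled conditions $G^h_{vx}=G^h_{vy}=0$ forces $c=A\,a$ and $\beta=\tfrac{2a'}{a}\log h+\alpha(v)$, i.e.\ exactly the stated form of $F$ with $h_x=a$, $h_0=e$; the remaining equation $G^h_{vv}=0$ becomes \eqref{eq:remaining-condition-pr-wave}. The main obstacle is this last step: one must integrate the first-order system coming from $G^h_{vx}=G^h_{vy}=0$ to produce the logarithmic term, carefully separating the subcases $a\equiv0$, $a$ constant and $a'\neq0$, and --- in the $2$-step case --- genuinely passing to $pp$-wave coordinates and checking that non-flatness excludes a nonzero constant $u$-coefficient in $h$. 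The curvature computations and the early reductions are routine once the components of $G^h$ are written down.
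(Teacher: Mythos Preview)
Your plan is correct and mirrors the paper's proof: the paper packages your first two paragraphs as a preliminary lemma (eliminating the $u$-coefficient in $h$ by showing $h_1\neq0$ forces flatness, and reducing $F$ to be affine in $u$), then splits on $|\nabla h|^2=h_x^2+h_y^2$ and, in the spacelike case, integrates $G^h_{vx}=0$ to obtain the logarithmic $F_1$ and $h_y=Ah_x$, with the $2$-step/$3$-step distinction governed by whether $h_x'$ vanishes. The only slip is your parenthetical in the $2$-step case: in $pp$-wave coordinates with $\partial_uF=0$ and $\partial_u h=0$ one has $G^h_{vx}=-a'$ and $G^h_{vy}=-c'$ directly, so constancy of $a,c$ follows from the equations themselves rather than from a ``$F$ affine $\Rightarrow$ flat'' argument.
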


We begin our arguments to provide the proof of Theorem~\ref{th:pr-waves-classification} with the following lemma, which gives the general form of the metric and the density function of every possible solution.

\begin{lemma}\label{lemma:$pr$-wave-1}
	Let $(M,g,h)$ be a 4-dimensional non-flat solution of the vacuum weighted Einstein field equations, realized on a $pr$-wave with metric given by \eqref{eq:local-coord-pr-xeral} in the coordinates $(u,v,x,y)$. Then, the Ricci operator is nilpotent, and the density function $h$ takes the form
	\begin{equation}\label{eq:hprwave-1}
		h(v,x,y)=h_x(v)x+h_y(v)y+h_0(v),
	\end{equation}
	while the function $F$ that defines the $pr$-wave is given by
	\begin{equation}\label{eq:Fprwave-1}
		F(u,v,x,y)=F_1(v,x,y)u+F_0(v,x,y),
	\end{equation}
	for suitable smooth functions $h_x$, $h_y$, $F_1$ and $F_0$.
\end{lemma}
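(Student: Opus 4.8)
The plan is to compute the weighted Einstein tensor $G^h$ explicitly in the coordinates $(u,v,x,y)$ adapted to the $pr$-wave metric~\eqref{eq:local-coord-pr-xeral} and extract enough constraints from the equation $G^h=0$ to pin down the form of $h$ and $F$. First I would record the relevant geometric data for the metric $g=2\,du\,dv+F\,dv^2+dx^2+dy^2$: the nonzero Christoffel symbols, the Ricci tensor (which for such a Brinkmann-type metric has essentially one nontrivial component, proportional to $\partial_x^2F+\partial_y^2F$ together with a first-derivative-in-$u$ term), and the scalar curvature $\tau$. Since $V=\partial_u$ is lightlike and the $pr$-wave condition forces $R(V^\perp,V^\perp,\cdot,\cdot)=0$, the image of $\Ric$ is spanned by $V$ (up to lower-order terms), so $\tau=0$; combined with Lemma~\ref{le:const-sc} this is automatic, but the vanishing of $\tau$ already simplifies $G^h=h\rho-\Hes_h+\Delta h\,g$ by killing no term directly — rather it will reappear once I see that the only surviving curvature contribution lands in the $dv^2$ slot. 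The claim that $\Ric$ is nilpotent should then drop out: the Ricci operator maps everything into $\spanned\{V\}$ and $V$ into $0$ (or into $\spanned\{V\}$), so some power of it vanishes.

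Next, the heart of the argument is the Hessian $\Hes_h$. Writing out $\Hes_h(\partial_i,\partial_j)=\partial_i\partial_j h-\Gamma_{ij}^k\partial_k h$ for each pair of coordinate vectors, most components reduce to plain second partials of $h$ because the only nonzero Christoffel symbols involve $F$ and point in the $u$-direction. Imposing $G^h=0$ component by component: the $(x,x)$, $(y,y)$, $(x,y)$, $(u,u)$, $(u,x)$, $(u,y)$ equations — where the Ricci contribution vanishes — give $\Hes_h=\Delta h\cdot(\text{metric component})$ in those slots, which forces $\partial_x^2 h=\partial_y^2 h=\partial_x\partial_y h=0$ and $\partial_u h=0$ and then $\Delta h=0$, hence $h$ is affine in $x,y$ with coefficients depending only on $v$, i.e.~$h(v,x,y)=h_x(v)x+h_y(v)y+h_0(v)$. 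That is~\eqref{eq:hprwave-1}. Once $h$ has this form and $\Delta h=0$, the equation $G^h=0$ collapses to its single remaining nontrivial component, the $(v,v)$-equation, which reads $h\,\rho_{vv}=\Hes_h(\partial_v,\partial_v)$; this is one scalar PDE relating $F$ and the $h$-coefficients.

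The final step is to read off the $u$-dependence of $F$ from that surviving $(v,v)$-equation. The Ricci component $\rho_{vv}$ is linear in $F$ and involves $\partial_u F$ (through the recurrent, non-parallel nature of $V$); the Hessian term $\Hes_h(\partial_v,\partial_v)=\partial_v^2 h-\Gamma_{vv}^k\partial_k h$ brings in $\Gamma_{vv}^u$, which is linear in $\partial_x F$, $\partial_y F$ and $\partial_v F$, and $\Gamma_{vv}^x$, $\Gamma_{vv}^y$ linear in $\partial_x F$, $\partial_y F$. Collecting the coefficient of $u$ in this equation — using that $h$ is independent of $u$ — yields a linear first-order ODE/PDE in $u$ for $F$ forcing $\partial_u^2 F$-type terms to vanish and $F$ to be affine in $u$, i.e.~$F(u,v,x,y)=F_1(v,x,y)u+F_0(v,x,y)$, which is~\eqref{eq:Fprwave-1}. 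I expect the main obstacle to be organizing the $(v,v)$-component computation cleanly: it is the one equation where curvature, the Hessian of a non-constant function, and the $u$-dependence of $F$ all interact, so care is needed to separate the part that is genuinely linear in $u$ (giving~\eqref{eq:Fprwave-1}) from the $u$-independent remainder (which becomes the constraint analyzed in Theorem~\ref{th:pr-waves-classification}). The nilpotency of $\Ric$, by contrast, is essentially immediate from the $pr$-wave curvature condition and should be dispatched first.
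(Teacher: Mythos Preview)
Your componentwise strategy is the right one and matches the paper's, but two of your early claims are wrong and leave a real gap. The $pr$-wave condition $R(V^\perp,V^\perp,\cdot,\cdot)=0$ does \emph{not} force $\tau=0$ or $\operatorname{im}\Ric\subset\spanned\{V\}$: for the metric~\eqref{eq:local-coord-pr-xeral} one has $\tau=\partial_u^2 F$, and a $pr$-wave is Ricci-isotropic precisely when it is already a $pp$-wave (see~\cite{leistner}). So nilpotency of $\Ric$ is a conclusion here, not an input. More seriously, the components $G^h_{uu},G^h_{ux},G^h_{uy},G^h_{xy}$ only yield $\partial_u^2 h=\partial_u\partial_x h=\partial_u\partial_y h=\partial_x\partial_y h=0$, hence $h=h_1(v)u+h_x(v,x)+h_y(v,y)$; they do \emph{not} kill the linear term $h_1(v)u$. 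With $h_1$ still present, $\Delta h$ picks up contributions through $g^{uu}=-F$ and $g^{uv}=1$, so the $(x,x)$ and $(y,y)$ equations are not simply $\partial_y^2 h=\partial_x^2 h=0$ as you assert.

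The paper closes this gap by first using Lemma~\ref{le:const-sc} ($\tau=\partial_u^2 F$ constant) to write $F=\tfrac{\tau}{2}u^2+F_1u+F_0$, and then splitting into cases. If $h_1\neq 0$, further components force $\tau=0$, $F_1=F_1(v)$, and eventually all curvature components vanish, contradicting non-flatness; this is where the hypothesis ``non-flat'' is actually used, and it never enters your outline. Only once $h_1=0$ is established do $G^h_{xx},G^h_{yy}$ give $h$ affine in $x,y$, and then $G^h_{uv}=\tfrac12\tau h$ (not the $(v,v)$-component) forces $\tau=0$, whence $F$ is affine in $u$. Your plan to read off~\eqref{eq:Fprwave-1} from the $(v,v)$-equation alone would have to contend with a quadratic $u$-term in $F$ and a linear $u$-term in $h$ simultaneously, and there is no reason to expect that single equation to separate them.
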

\begin{proof}
Consider coordinates $(u,v,x,y)$ so that the metric of the $pr$-wave is written as in \eqref{eq:local-coord-pr-xeral}. Then, the scalar curvature of the manifold is given by $\tau=\partial_u^2F$. Since, by Lemma~\ref{le:const-sc}, $\tau$ is constant for any solution, it follows that $F$ takes the form $F(u,v,x,y)=\frac{\tau}{2}u^2+F_1(v,x,y)u+F_0(v,x,y)$. We simplify notation and denote $G^h(\partial_u,\partial_u)=G^h_{uu}$, $G^h(\partial_u,\partial_x)=G^h_{ux}$, $G^h(\partial_x,\partial_y)=G^h_{xy}$ and so on to compute the following components of the weighted Einstein tensor:
	\[
	G^h_{uu}=-\partial_u^2h, \quad G^h_{ux}=-\partial_u\partial_xh, \quad G^h_{uy}=-\partial_u\partial_yh, \quad G^h_{xy}=-\partial_x\partial_yh.
	\]
	Hence $h$ can be written as $h(u,v,x,y)=h_1(v)u+h_x(v,x)+h_y(v,y)$ for certain $h_1$, $h_x$ and $h_y$. Now, take the component $G^h_{yy}=-h_1(\tau u+F_1)+2 h_1'+\partial_x^2h_{x}$. Differentiate with respect to $u$ to find $0=\partial_uG^h_{yy}=-\tau h_1$. On the other hand, we compute the component
	\[
	2G^h_{vx}=-2\partial_v \partial_x h_x+ (h_x+h_y+2u h_1)\partial_x F_1+h_1 \partial_xF_0,
	\]
	hence $0=\partial_u G^h_{vx}=h_1 \partial_x F_1$. Similarly, we find $0=\partial_u G^h_{vy}=h_1 \partial_y F_1$. Thus, there are two possibilities: either  $h_1=0$ or $h_1\neq0$, $\tau=0$ and $F_1=F_1(v)$. We analyze them separately.
	\subsubsection*{Case 1: $h_1\neq0$, $\tau=0$ and $F_1=F_1(v)$} We will see that this case results in a flat manifold. Let $h_1\neq0$, $\tau=0$ and $F_1=F_1(v)$. The only non-vanishing components (up to symmetries) of the curvature tensor for a $pr$-wave with $F(u,v,x,y)=F_1(v)u+F_0(v,x,y)$ are
	\[
	\begin{array}{rcl}
		2R(\partial_x,\partial_v,\partial_v,\partial_x)&=&\partial_x^2F_0, \quad 2R(\partial_y,\partial_v,\partial_v,\partial_y)=\partial_y^2F_0, \\
		\noalign{\medskip}
		2R(\partial_y,\partial_v,\partial_v,\partial_x)&=&\partial_x\partial_y F_0.
	\end{array}
	\]
	Moreover, the weighted Einstein field equations  yield $0=2\partial_xG^h_{vy}=h_1\partial_x\partial_y F_0$. Since $h_1\neq 0$, we have $R(\partial_y,\partial_v,\partial_v,\partial_x)=0$. Additionally, we compute
	\[
	\begin{array}{rcl}
		0&=&G_{xx}^h=-h_1F_1+2h_1'+\partial_y^2h_y,\\
		\noalign{\medskip}
		0&=&G_{yy}^h=-h_1F_1+2h_1'+\partial_x^2h_x,\\
		\noalign{\medskip}
		0&=&2G^h_{uv}=- h_1 F_1+2h_1'+2\partial_x^2 h_x+2\partial_y^2 h_y.
	\end{array}
	\]
	On the one hand, $0=G_{xx}^h-G_{yy}^h=\partial_y^2h_y-\partial_x^2h_x$ and, on the other hand, $G_{xx}^h+G_{yy}^h-4G^h_{uv}=-3\left(\partial_x^2 h_x+\partial_y^2 h_y\right)$, so $\partial_x^2h_x=\partial_y^2h_y=0$. Now, we compute $0=2\partial_xG^h_{vx}=h_1\partial_x^2F_0$ and $0=2\partial_yG^h_{vy}=h_1\partial_y^2F_0$. Since $h_1\neq0$, we obtain that  $\partial_x^2F_0=\partial_y^2F_0=0$ and, hence, $R(\partial_x,\partial_v,\partial_v,\partial_x)=R(\partial_y,\partial_v,\partial_v,\partial_y)=0$. Thus, all components of the curvature tensor vanish and the underlying manifold $(M,g)$ is flat, contrary to our assumption.
	
	\subsubsection*{Case 2: $h_1=0$}
	Then, by the weighted Einstein tensor components $G^h_{xx}=\partial_y^2h_y$ and $G^h_{yy}=\partial_x^2h_x$, we find that $h$ takes the form in \eqref{eq:hprwave-1}. Moreover, from $G^h_{uv}= \frac12\tau h$, and since $h>0$, we have $\tau=0$, so $F$ is given by \eqref{eq:Fprwave-1}. For this form of $F$ the Ricci operator is nilpotent.
\end{proof}


\noindent{\it Proof of Theorem~\ref{th:pr-waves-classification}}.
From Lemma~\ref{lemma:$pr$-wave-1}, $h$ and $F$ are given by \eqref{eq:hprwave-1} and \eqref{eq:Fprwave-1}, respectively. A direct computation shows that $|\nh|^2=h_x^2+h_y^2$ so $\nh$ cannot be timelike. Thus, assume first that $(M,g,h)$ is an isotropic solution, so we get that $h_x=h_y=0$ and $h(u,v,x,y)=h(v)$. Now, we see that
\[
2G_{vx}^h=h\, \partial_x F_1, \text{ and } 2G_{vy}^h=h\, \partial_y F_1,
\]
so $\partial_x F_1=\partial_y F_1=0$ and $F_1(v,x,y)=F_1(v)$. But this implies that the only non-vanishing component of the Ricci operator is $\Ric(\partial_v)=-\frac12 \left(\partial_x^2 F_0+\partial_y^2 F_0\right)\partial_u$, so $\Ric$ is $2$-step nilpotent and the $pr$-wave is indeed a $pp$-wave (see \cite{leistner}). Thus, there exist specific coordinates \eqref{eq:local-coord-pr-xeral} with $\partial_uF=0$, where the only non-zero component of $G^h$ is $2G^h(\partial_v,\partial_v)= -2 h'' -h\left(\partial_x^2 F+\partial_y^2 F\right)$, so   $\partial_x^2 F+\partial_y^2 F=-\frac{2h''(v)}{h(v)}=\gamma(v)$. This is a sufficient condition for a $pp$-wave to have harmonic curvature (see \cite{BV-GR-VR}), and Theorem~\ref{th:pr-waves-classification}~(1) follows.	

Assume now that $\nh$ is spacelike. Recall that, by Lemma~\ref{lemma:$pr$-wave-1}, $h$ and $F$ are given by \eqref{eq:hprwave-1} and  \eqref{eq:Fprwave-1}, respectively. Since $|\nh|^2=h_x^2+h_y^2$, we assume without loss of generality that $h_x\neq 0$ (otherwise, interchange the $x$ and $y$ coordinates). Under this condition, we compute the vacuum weighted Einstein equation component $0=G^h_{vx}=-h_x'+\frac{1}2(h_0+xh_x+yh_y)\partial_x F_1$. We solve this PDE to find
\begin{equation}\label{eq:F1alpha}
	F_1(v,x,y)=\alpha(v,y)+2\frac{\log(h_0(v)+x h_x(v)+yh_y(v))h_x'(v)}{h_x(v)}.
\end{equation}
For this form of $F_1$, we compute $0=2\partial_x G_{vy}^h=h_x\partial_y\alpha$ so $\alpha=\alpha(v)$. Moreover, in this case $0=G_{vy}^h=\frac{h_yh_x'}{h_x}-h_y'$. Hence, we have $h_y(v)=Ah_x(v)$ for some $A\in \mathbb{R}$ (this includes the case $h_y=0$). With this, all components of the weighted Einstein equation vanish, except for $G^h_{vv}$, which is given by expression \eqref{eq:remaining-condition-pr-wave}. 
The Ricci operator is given by 
\[
\Ric(\partial_u)=0,\; \Ric(\partial_v)=\star \partial_u+\frac{h_x'}{h} \partial_{x}+ \frac{A h_x'}{h} \partial_{y},\; \Ric(\partial_x)=\frac{h_x'}{h} \partial_u,\; \Ric(\partial_y)=\frac{A h_x'}{h} \partial_u,
\]
where the expression of the coefficient $\star$ is irrelevant for the nilpotency of $\Ric$. Notice that $\Ric$ is $3$-step nilpotent if and only if $h_x'\neq 0$. This case corresponds to Theorem~\ref{th:pr-waves-classification}~(2)(b). 

Now, assume $h_x'=0$. In this case, the Ricci operator becomes $2$-step nilpotent, so the $pr$-wave is indeed a $pp$-wave and we can assume $\partial_uF=0$. Finally, a straightforward calculation shows that the only remaining non-vanishing component of the weighted Einstein equation is given by \eqref{eq:condition-pp-wave}. \qed

\begin{remark}
	Note, from Theorem~\ref{th:pr-waves-classification}~(1), that any isotropic solution on a $pr$-wave is a $pp$-wave with harmonic curvature. Furthermore, any $pp$-wave with harmonic curvature gives rise to an isotropic solution, since there always exists a local solution of the ODE $2h''+h \gamma=0$ for a given $ \gamma(v)=\partial_x^2F+\partial_y^2F$. In particular, if $\gamma$ is constant, the ODE reduces to the harmonic oscillator equation. Thus, for $\gamma<0$, the density function takes the form $h(v)=A e^{\frac{\sqrt{-\gamma } v}{\sqrt{2}}}+B
	e^{-\frac{\sqrt{ -\gamma } v}{\sqrt{2}}}$, so the solution can be extended to all $\mathbb{R}$ for appropriate values of $A$ and $B$. We refer to the next section for geodesically complete solutions. 
\end{remark}

\begin{remark}\rm
	Non-isotropic solutions on $pr$-waves are described in  Theorem~\ref{th:pr-waves-classification}~(2). Those with $2$-step nilpotent Ricci operator are realized on $pp$-waves, but not every $pp$-wave gives rise to a solution, since equation \eqref{eq:condition-pp-wave} imposes restrictions on the function $F$. However, given a (real) analytic density function of the form $h=h_0(v)+(x+Ay)h_x$, there always exist local analytic solutions of \eqref{eq:condition-pp-wave}. Indeed,  \eqref{eq:condition-pp-wave} is a second order quasi-linear PDE that we can write as
	\begin{equation}\label{eq:CK-th1}
		\partial_x^2F=-\frac{1}{h}\left(2h_0''+h_x(\partial_xF+A \partial_yF)\right)- \partial_y^2F.
	\end{equation}
	We consider the non-characteristic hypersurface $x=0$ for this PDE and set analytic initial data $F_{|_{x=0}}=\varphi_0$ and $\partial_xF_{|_{x=0}}=\varphi_1$. Now, the Cauchy-Kovalevskaya Theorem guarantees that there exists an analytic solution $F$ to equation \eqref{eq:CK-th1} (see, for example, \cite{Evans}), thus giving rise to a solution with the prescribed density. 
	
	The situation is similar for $3$-step nilpotent solutions that are realized on $pr$-waves. Although not every $pr$-wave results in a solution of the vacuum weighted Einstein field equations, for a given analytic density function of the form $h(v,x,y)=h_0(v)+(x+Ay)h_x(v)$, there exist forms of $F$ that give rise to $pr$-waves $(M,g)$ so that $(M,g,h)$ is a solution of \eqref{eq:vacuum-Einstein-field-equations}. Indeed, since $h_x\neq0$ and $h(v,x,y)=h_0(v)+h_x(v)(x+Ay)>0$, the hypersurface $x=0$ is non-characteristic for the PDE \eqref{eq:remaining-condition-pr-wave}. Thus, for analytic functions $h_0$, $h_x$, $\alpha$ and analytic boundary conditions, due to the Cauchy-Kovalevskaya Theorem, there is a unique local analytic solution of the corresponding Cauchy problem.
\end{remark}

\section{Solutions with special geometric features}\label{sect:examples}

Our purpose in this section is to use the local description in Section~\ref{sect:pr-waves} to provide explicit examples of solutions that show different properties and behavior. Non-isotropic solutions with harmonic curvature have a distinguished lightlike vector field, indeed they are realized on Kundt spacetimes (see \cite{Brozos-Mojon-noniso}). Therefore, to extend the understanding of this kind of solutions, we begin by focusing on classifying all $pr$-waves which give rise to a solution with this curvature property. Then we turn our attention to providing examples of solutions realized on geodesically complete families of $pr$-waves (such as Cahen-Wallach spacetimes), and explain obstructions to geodesic completeness for non-isotropic solutions.
  
\subsection{$pr$-waves with harmonic curvature}
Solutions to the vacuum weighted Einstein field equations were considered in general in \cite{Brozos-Mojon-noniso} with additional restrictions on the geometry, like being locally conformally flat or having harmonic Weyl tensor. Notice that the latter condition, since the scalar curvature is constant by Lemma~\ref{le:const-sc}, is equivalent to the harmonicity of the curvature tensor. Moreover, by Theorem~\ref{th:pr-waves-classification}~(1), isotropic solutions on $pr$-waves have harmonic curvature. Hence, motivated by these facts, we give an explicit description of the possible solutions on $pr$-waves which have harmonic curvature.

\begin{corollary}\label{co:pr-wave-harmonic}
	Let $(M,g,h)$ be a 4-dimensional, non-flat solution of the vacuum weighted Einstein field equations, realized on a $pr$-wave with harmonic curvature. Then the following holds:
	\begin{enumerate}
		\item If $\nh$ is lightlike, then $(M,g)$ is a $pp$-wave. In coordinates as in \eqref{eq:local-coord-pr-xeral} with $\partial_uF=0$, $F$ satisfies $\partial_x^2F+\partial_y^2F=\gamma(v)$ and the density $h=h(v)$ is subject to the ODE $2h''+h\gamma=0$.
		\item If $\nh$ is spacelike, then $(M,g)$ is a plane wave. Moreover, the metric in \eqref{eq:local-coord-pr-xeral}  takes the form
	\[F(v,x,y)=F_x(v)x^2+F_y(v)y^2-2A(F_x(v)+2F_y(v))xy\]
	with $A\in \mathbb{R}$ and $F_x$, $F_y$ functions subject to the relation $(2-A^2)F_x+(1-2A^2)F_y=0$. The density function has the form $h(u,v,x,y)=h_0(v)+ (x+Ay)h_x$ with $h_0$ obeying  $h_0''(v)+(F_x(v)+F_y(v)) h_0(v)=0$.
	\end{enumerate}
\end{corollary}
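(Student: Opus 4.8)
The plan is to run Theorem~\ref{th:pr-waves-classification} with the extra hypothesis that the curvature is harmonic. Since the scalar curvature is constant (Lemma~\ref{le:const-sc}), harmonicity of the curvature tensor is equivalent to the Ricci tensor $\rho$ being a Codazzi tensor, $(\nabla_X\rho)(Y,Z)=(\nabla_Y\rho)(X,Z)$, and this is the condition I would feed into the two cases of Theorem~\ref{th:pr-waves-classification}. If $\nh$ is lightlike there is nothing to prove: by Theorem~\ref{th:pr-waves-classification}~(1) an isotropic $pr$-wave solution is automatically a $pp$-wave with harmonic curvature, and the coordinate description there ($\partial_uF=0$, $\partial_x^2F+\partial_y^2F=\gamma(v)$, $2h''+h\gamma=0$) is exactly part~(1) of the corollary. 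So all the work is in the spacelike case, where Theorem~\ref{th:pr-waves-classification}~(2) places us in subcase (a) or (b).

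I would first eliminate the $3$-step nilpotent subcase~(b). There $h=h_0(v)+(x+Ay)h_x(v)$ with $h_x'\neq0$, and from the Ricci operator computed in the proof of Theorem~\ref{th:pr-waves-classification} the Ricci tensor has $\rho(\partial_v,\partial_x)=h_x'/h$, $\rho(\partial_v,\partial_y)=Ah_x'/h$, $\rho(\partial_v,\partial_v)=\star$, and all other components zero; in particular $\rho(\partial_u,\cdot)=0$ and $\rho(\partial_x,\partial_x)=0$. Using only the elementary Christoffel symbols $\nabla_{\partial_x}\partial_x=0$ and $\nabla_{\partial_x}\partial_v=\nabla_{\partial_v}\partial_x=\tfrac12(\partial_xF)\partial_u$ (valid for any $pr$-wave, even when $F$ depends on $u$), the Codazzi identity with $X=Z=\partial_x$, $Y=\partial_v$ collapses to $\partial_x(h_x'/h)=0$, i.e. $h_xh_x'/h^2=0$. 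As $h_x\neq0$ this forces $h_x'=0$, a contradiction. Hence $\Ric$ is $2$-step nilpotent and we are in Theorem~\ref{th:pr-waves-classification}~(2)(a): a $pp$-wave with $\partial_uF=0$, $h=h_0(v)+(x+Ay)h_x$ with $h_x$ a nonzero constant, and \eqref{eq:condition-pp-wave} holding.

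For a $pp$-wave the Ricci tensor has the single component $\rho(\partial_v,\partial_v)=-\tfrac12(\partial_x^2F+\partial_y^2F)$, and the Codazzi condition is then easily seen to be equivalent to $\partial_x^2F+\partial_y^2F=\gamma(v)$. Substituting this into \eqref{eq:condition-pp-wave} gives a first-order equation $\partial_xF+A\partial_yF=\psi(v)-\gamma(v)(x+Ay)$, with $\psi=-(2h_0''+h_0\gamma)/h_x$. I would then expand $F$ in $v$-dependent homogeneous polynomials in $(x,y)$: the Laplace-type equation makes every homogeneous part of degree $\ge3$ harmonic, and the first-order equation makes those same parts be annihilated by $\partial_x+A\partial_y$; a homogeneous harmonic polynomial of degree $\ge3$ killed by $\partial_x+A\partial_y$ vanishes, so $F$ equals, up to terms affine in $(x,y)$, the quadratic $F_xx^2+F_yy^2-2A(F_x+2F_y)xy$, with $\gamma=2(F_x+F_y)$, and the vanishing of the resulting $x$- and $y$-coefficients in \eqref{eq:condition-pp-wave} is precisely the relation $(2-A^2)F_x+(1-2A^2)F_y=0$. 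Since affine-in-$(x,y)$ terms in $F$ do not affect the curvature, $(M,g)$ is a plane wave; and after an admissible Brinkmann-type change of coordinates removing those terms — which leaves the quadratic part of $F$ and the shape $h_0(v)+(x+Ay)h_x$ of the density untouched — the only remaining content of \eqref{eq:condition-pp-wave} is the ODE $h_0''+(F_x+F_y)h_0=0$. This establishes part~(2).

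The main obstacle, as is typical in these classifications, is the spacelike case, and within it two points stand out. The first is spotting the single Codazzi component that isolates $h_x'$ without dragging in the undetermined function $F_0$; once that is found, subcase~(b) dies in one line. The second is integrating the overdetermined pair of equations for $F$ and then carrying out the coordinate normalization that turns \eqref{eq:condition-pp-wave} into the clean ODE for $h_0$; this is where the computations live, although each step is elementary. The lightlike case, and the converse check that the quadratic $F$ above does satisfy \eqref{eq:condition-pp-wave}, are routine.
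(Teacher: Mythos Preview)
Your proposal is correct and follows the paper's route: cite Theorem~\ref{th:pr-waves-classification}, eliminate the $3$-step case via a single Codazzi component (the paper uses $(\nabla_{\partial_u}\rho)(\partial_v,\partial_v)-(\nabla_{\partial_v}\rho)(\partial_u,\partial_v)$ rather than your $(\partial_x,\partial_v,\partial_x)$ triple, but both isolate $h_x'$), and then feed the harmonic condition $\partial_x^2F+\partial_y^2F=\beta(v)$ into \eqref{eq:condition-pp-wave} to force $F$ quadratic and read off the constraint $(2-A^2)F_x+(1-2A^2)F_y=0$ and the ODE for $h_0$. The one point to tighten is your ``expand $F$ in $v$-dependent homogeneous polynomials'' step, which as phrased presumes analyticity; the paper sidesteps this by differentiating \eqref{eq:condition-pp-wave} in $x$ and $y$ to show directly that all third partials of $F$ vanish, and your argument becomes rigorous the same way once you observe that your overdetermined pair of PDEs determines every entry of $\Hes_F$ as a function of $v$ alone.
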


\begin{proof}
It was shown in Theorem~\ref{th:pr-waves-classification}~(1) that isotropic solutions of the vacuum weighted Einstein field equations  are realized on manifolds with harmonic curvature, so Corollary~\ref{co:pr-wave-harmonic}~(1) holds and we focus on the case with $\nh$ spacelike.  From Theorem~\ref{th:pr-waves-classification}, the Ricci operator is $2$ or $3$-step nilpotent. If $\Ric$ is $3$-step nilpotent, then $F$ takes the form given in Theorem~\ref{th:pr-waves-classification}~(2)(b). In this case, we compute  \[
0=(\nabla_{\partial_u}\rho)(\partial_v,\partial_v)-(\nabla_{\partial_v}\rho)(\partial_u,\partial_v)=\frac{\left(A^2+1\right) h_x(v) h_x'(v)}{
	h^2}.
\] 
Hence $h_x'=0$ and we conclude that the Ricci operator is $2$-step nilpotent. Since the image of $\Ric$ is totally isotropic, the underlying manifold is a $pp$-wave (see \cite{leistner}).

Thus, adopting the notation in Theorem~\ref{th:pr-waves-classification}~(2)(a), we  take coordinates $(u,v,x,y)$ as in \eqref{eq:local-coord-pr-xeral} with $\partial_u F=0$, such that the density function has the form $h(u,v,x,y)=h_0(v)+ (x+Ay)h_x$ with $h_x\neq 0$. Recall that, in these coordinates, the curvature is harmonic if and only if $\partial_x^2 F+\partial_y^2 F=\beta(v)$, for an arbitrary function $\beta\neq 0$.

Moreover,  
the only non-vanishing term of the weighted Einstein tensor is given by \eqref{eq:condition-pp-wave}. Using the fact that $\partial_x^2 F+\partial_y^2 F=\beta(v)$, the crucial equation becomes $2h_0''+h_x(\partial_xF+A \partial_yF)+h\beta=0$. Differentiating with respect to $x$ and $y$ yields
\begin{equation}\label{eq:dereqs-ppwave}
	0=h_x(A\partial_x\partial_yF+(2\partial_x^2F+\partial_y^2F)), \quad 0=h_x(\partial_y\partial_xF+A(2\partial_y^2F+\partial_x^2F)).
\end{equation}
Since $h_x\neq 0$, we get that $A\partial_x\partial_yF+(2\partial_x^2F+\partial_y^2F)=0$ and that $\partial_y\partial_xF+A(2\partial_y^2F+\partial_x^2F)=0$. If $A=0$, then $\partial_y\partial_xF=0$, while if $A\neq0$, combining both equations yields $(1+A^2)\partial_x\partial_yF+3A \beta=0$, and hence $\partial_x\partial_yF=-\frac{3A\beta(v)}{1+A^2}$. Thus, for any value of $A$, we have $\partial_x^2\partial_yF=\partial_x\partial_y^2F=0$. Moreover, differentiating $A\partial_x\partial_yF+(2\partial_x^2F+\partial_y^2F)=0$ with respect to $x$ and $y$, we get that $\partial_x^3 F=\partial_y^3F=0$ too.

It follows that $F$ is a polynomial of order two in the variables $x,y$, whose coefficients are smooth functions of $v$. Hence, the underlying manifold is a plane wave, and can be further normalized so that 
\[
	F(v,x,y)=F_y(v)y^2+F_x(v)x^2+F_{xy}(v)xy
\]
for some smooth functions $F_y$, $F_x$ and $F_{xy}$. With this, from \eqref{eq:dereqs-ppwave} we get
\[
		0=AF_{xy}+2(2F_x+F_y), \quad 0=F_{xy}+2A(2F_y+F_x).
\]
We can solve the second equation above to find $F_{xy}=-2A(2F_y+F_x)$, and substituting this into the first one yields $(2-A^2)F_x+(1-2A^2)F_y=0$. 

Now, equation~\eqref{eq:condition-pp-wave} reduces to \[
0=h_0''(v)+(F_x+F_y) h_0(v)-h_x x \left(\left(A^2-2\right)
F_x+\left(2 A^2-1\right) F_y\right),
\]
and, since $(2-A^2)F_x+(1-2A^2)F_y=0$, we have that $h_0$ satisfies $h_0''(v)+(F_x+F_y) h_0(v)=0$. This completes the proof of Corollary~\ref{co:pr-wave-harmonic}~(2).
\end{proof}
%

\subsection{Geodesically complete solutions}\label{subsect:geosically-complete}

The family of plane waves appears in Corollary~\ref{co:pr-wave-harmonic}~(2), since every $pr$-wave with harmonic curvature that results in a non-isotropic solution for equation \eqref{eq:vacuum-Einstein-field-equations} is indeed a plane wave. Inspired by this fact and taking into account that plane waves are geodesically complete (see \cite{candela-flores-sanchez}), one may look for global solutions in this context. However, one of the difficulties in finding global solutions of the vacuum weighted Einstein field equations is that some geodesically complete spacetimes do not admit a globally defined density function. Indeed, notice that non-isotropic solutions described in Corollary~\ref{co:pr-wave-harmonic}~(2) cannot be extended for any $(x,y)\in \mathbb{R}^2$. In this section we illustrate this fact and provide some global examples.

{\it Cahen-Wallach spaces}. 
Cahen-Wallach spaces are the only indecomposable but not irreducible symmetric manifolds (see \cite{Cahen-Leroy-Parker-Tricerri-Vanhecke,cahen-wallach}). They are plane waves, hence geodesically complete, and can be written in coordinates $(u,v,x,y)$ on $\mathbb{R}^4$ as in \eqref{eq:local-coord-pr-xeral} with $F(v,x,y)= a x^2+by^2$. Since Cahen-Wallach spaces are symmetric, they have harmonic curvature and we can apply Corollary~\ref{co:pr-wave-harmonic} directly to obtain the following families of solutions: 

\begin{itemize}
	\item {\it Global isotropic solutions.} The density function $h=h(v)$ satisfies the equation $0=h''+(a+b)h$. Thus, only spacetimes with $a+b<0$ result in vacuum global solutions for an appropriate density function. Indeed, $a+b\geq0$ yields densities which turn nonpositive for certain values of $v$. Moreover if $a+b<0$, $h$ has the form $h(v)=c_1 e^{v \sqrt{-a-b}}+c_2 e^{-v\sqrt{-a-b}}$, $c_1,c_2\in \mathbb{R}$. To define a global solution we can take  $c_1,c_2\geq 0$, maybe with $c_1=0$ or $c_2=0$, so as to keep $h>0$ for all $v\in \mathbb{R}$.
	\item {\it Local non-isotropic solutions.} In virtue of Corollary~\ref{co:pr-wave-harmonic}~(2), the form of $F$ is restricted to $F(v,x,y)=a(x^2-2y^2)$ and the density function takes the form $h(v,x)=h_0(v)+h_x x $ with $h_x\neq 0$ and $0=h''_0-ah_0$. Thus, for a fixed value of $v$, the density turns non-positive for large enough values of $x$. Hence, only local solutions are admissible.
\end{itemize}

{\it A family of manifolds with recurrent curvature.}  Consider a plane wave on $\mathbb{R}^4$, given by the metric \eqref{eq:local-coord-pr-xeral} with
\begin{equation}\label{eq:egorov-metric}
	F(v,x,y)= f(v) (ax^2+by^2),
\end{equation}
in the usual coordinates $(u,v,x,y)$, for a certain non-constant function $f(v)$. One characteristic property of these spacetimes is that they have recurrent curvature, this is, $\nabla R=\sigma \otimes R$ for a $1$-form $\sigma$, but they are not locally symmetric (see \cite{Galaev,walker}).  Whenever $a=b$, the resulting manifolds are locally conformally flat and they are referred to as {\it Egorov spaces}. Note that Egorov spaces such that $f(v)$ is a constant are Cahen-Wallach manifolds belonging to the family of {\it $\varepsilon$-spaces}. Egorov spaces and $\varepsilon$-spaces are notable examples of Lorentzian manifolds with large isometry groups (see \cite{calvaruso-garcia}). 

Spacetimes given by \eqref{eq:egorov-metric} are not homogeneous in general, but they have harmonic curvature tensor, so we can apply Corollary~\ref{co:pr-wave-harmonic} to find the following families of solutions:
\begin{enumerate}
	\item {\it Global isotropic solutions.} For a density function of the form $h=h(v)$ we have that $\nh$ is lightlike. Moreover, the only non-vanishing component of \eqref{eq:vacuum-Einstein-field-equations} becomes (see Corollary~\ref{co:pr-wave-harmonic}~(1))
	\begin{equation}\label{eq:compo-non-cero}
		0=G^h_{vv}=-h''-(a+b) f h.
	\end{equation}
hence, we can choose appropriate values of $f$ that provide global solutions. For example, we consider the following:
	\begin{enumerate}
		\item Let $h=h(v)$ and set $f(v)=\frac{1}{h(v)}$. Then \eqref{eq:compo-non-cero} reduces to
\[
	0=-G_{vv}=(a+b)+h''(v).
\]
Thus, $h(v)=-\frac{a+b}2v^2+c_1v+c_2$ is the general solution of the ODE. Choosing $a+b<0$ and $c_2>-\frac{c_1^2}{2 (a+ b)}$, results in $h(v)>0$ for all $v\in \mathbb{R}$, giving rise to a global solution.

		\item Let $h=h(v)$ and $f(v)=-\frac{1+(a+b) e^{4 v}}{a+b}$. Now, the ODE \eqref{eq:compo-non-cero} is 
		\[
		h(v) \left(1+(a +b) e^{4 v}\right)-h''(v)=0.
		\] 
The general solution of this ODE for $a+b>0$ is
\[
h(v)=e^{-v}\left(c_1 \cosh \left(\frac{1}{2} e^{2
		v} \sqrt{a+b}\right)+c_2 \sinh \left(\frac{1}{2} e^{2 v} \sqrt{a+b}\right)\right).
\]
Thus, for $a+b>0$, we obtain geodesically complete solutions of \eqref{eq:vacuum-Einstein-field-equations} by taking $c_1> |c_2|$.
\end{enumerate}
	\item {\it Local non-isotropic solutions.} By Corollary~\ref{co:pr-wave-harmonic}~(2), we have $F(v,x,y)=af(v)(x^2-2y^2)$ and the density function takes the form $h(v,x)=h_0(v)+ h_x x$, with $h_x\neq 0$ and $h_0''(v)-af(v) h_0(v)=0$. As for Cahen-Wallach spaces, for a fixed $v$, the density turns non-positive for large enough values of $x$, so only local solutions are admissible. For example, based on the form of solutions given in the isotropic case, we have:
	\begin{enumerate}
		\item Let $f(v)=\frac{1}{h_0(v)}$ to see that $F(v,x,y)=a \frac{x^2-2y^2}{\frac{a}2 v^2+c_1v+c_2}$ and $h(v,x)=\frac{a}2v^2+c_1v+c_2+ h_x x$ define a local non-isotropic solution for $a\neq 0$ and $h_x\neq 0$.
		\item Let $f(v)=\frac{1}a+e^{4v}$ for $a>0$ to obtain that $F(v,x,y)=\left(1+a e^{4v}\right) (x^2-2 y^2)$ and $h(v)=e^{-v} \left(c_1 \cosh \left(\frac{1}{2}
			\sqrt{a} e^{2 v}\right)+c_2 \sinh \left(\frac{1}{2} \sqrt{a} e^{2 v}\right)\right)+x h_x$ define non-isotropic solutions for positive values of $x$ if $h_x>0$.		  
	\end{enumerate}
\end{enumerate}

	

\section{Conclusions}

We have considered a broad family of Lorentzian metrics, that of $pr$-waves, which includes many examples of solutions to equation \eqref{eq:vacuum-Einstein-field-equations}, and appears naturally in previous studies of this equation. The analysis we have carried out and the given examples illustrate some important aspects:
\begin{itemize}
	\item Although the geometry of isotropic solutions is very rigid, as pointed out in \cite{Brozos-Mojon,Brozos-Mojon-noniso}, under some geometric conditions non-isotropic solutions are also very rigid. For example, in the context of Corollary~\ref{co:pr-wave-harmonic}, the structure is more restricted for non-isotropic than for isotropic solutions.
	\item Although some geodesically complete examples are given, the extension of the density function to the whole manifold becomes an important issue. Indeed, the analysis in Subsection~\ref{subsect:geosically-complete} illustrated the difficulty of finding global solutions in certain spaces, especially in the non-isotropic case.
	\item The condition of harmonic curvature considered in Corollary~\ref{co:pr-wave-harmonic} reduces the admissible solutions to $pp$-waves (for isotropic solutions) and plane waves (for non-isotropic ones). Note that this harmonicity condition does not increase the rigidity of isotropic solutions with respect to Theorem~\ref{th:pr-waves-classification}. However, in the non-isotropic case, removing the condition allows for local solutions with 3-step nilpotent Ricci operator, which are $pr$-waves but not $pp$-waves. This contrasts with the broader family of Kundt spacetimes, which do allow examples with harmonic curvature and $3$-step nilpotent Ricci operator (see \cite{Brozos-Mojon-noniso}).
\end{itemize}

\end{document}